\documentclass[12pt, a4paper]{article}

\usepackage{cite, amsmath, amssymb, booktabs, lastpage, graphicx}
\usepackage[hidelinks]{hyperref}

\usepackage{geometry}
\usepackage{setspace}

\usepackage[mathscr]{euscript}
\usepackage{amsthm}
\usepackage{xcolor}
\theoremstyle{plain}
\newtheorem{lemma}{Lemma}
\newtheorem{theorem}{Theorem}

\theoremstyle{remark}

\theoremstyle{definition}

\usepackage{graphicx}

\makeatletter
\renewcommand{\maketitle}{
\begin{center}

{\Large\bfseries \@title\par}
\vspace{6mm}

{\large\bfseries \@author\par}
\vspace{4mm}

{\itshape \@address\par}
\vspace{2mm}

{\small\ttfamily \@email\par}
\vspace{5mm}

\vspace{5mm}

\end{center}
}
\makeatother

\makeatletter
\newcommand{\address}[1]{\gdef\@address{#1}}
\newcommand{\email}[1]{\gdef\@email{#1}}
\makeatother

\address{}
\email{}

\usepackage{fancyhdr}

\usepackage[margin=1cm,%
			font=footnotesize,%
			format=hang,%
			labelsep=period,%
			labelfont=bf]{caption}
\allowdisplaybreaks

\title{Energy and Vertex Energy of Modified Divisor Prime Graphs}
\author{Purva J. Makadiya $^{a,}$\footnote{Corresponding author.}, Mahesh M. Jariya$^b$, Prashant J. Makadiya$^c$}
\address{$^{a,b}$Department of Mathematics, \\ Saurashtra University,  Rajkot-360005, \\ Gujarat, India.
    }
\email{purvamakadiya2000@gmail.com, mahesh.jariya@gmail.com, prashantmakadiya1996@gmail.com}

\date{\today}

\begin{document}

\maketitle
\thispagestyle{empty}

\begin{abstract}
This paper investigates the energy and vertex energy of the modified divisor prime graph $G^*_{Dp}(n)$, which is distinguished from the standard divisor prime graph by the inclusion of a self-loop at the vertex $1$. To facilitate this analysis, we introduce a generalized definition of vertex energy for graphs with self-loops and demonstrate its mathematical consistency.
\end{abstract}

\vspace{2mm}
\noindent\textbf{Keywords:} modified divisor prime graph, graph energy, vertex energy, tensor product.

\vspace{1mm}
\noindent\textbf{2020 Mathematics Subject Classification:} 05C50, 05C76, 11A25

\section{Introduction}

Let $n$ be a positive integer and $D(n)$ denote the set of all positive divisors of $n$. Introduced by Nair \emph{et al.} \cite{nair}, the \textit{divisor prime graph} of $n$, is a graph with vertex set $D(n)$ such that two vertices $u$ and $v$ are adjacent if and only if $\gcd(u,v) = 1$. The loop at $1$ is neglected.

This graph structure provides a fascinating intersection between number theory and spectral graph theory.
In this paper, we study a natural variant of this concept, which we call the \textit{modified divisor prime graph}, denoted by $G^*_{Dp}(n)$. It is defined on the vertex set $D(n)$ consisting of all divisors of $n$. Two vertices $u, v \in V$ are adjacent if and only if $\gcd(u, v) = 1$. 

Consequently, $G^*_{Dp}(n)$ differs from the standard divisor prime graph precisely by retaining a single self-loop at the vertex $1$. The presence of this self-loop fundamentally alters the spectral properties of the graph, particularly its energy.

The classical graph energy was originally defined strictly for simple graphs as the sum of the absolute values of the eigenvalues of the adjacency matrix \cite{gut1}. Recently, Gutman \emph{et al.} \cite{gut2} generalized the concept of graph energy to accommodate graphs with self-loops as follows: Let $G$ be a graph on $N$ vertices containing exactly $\sigma$ self-loops, and let $A$ be its adjacency matrix with eigenvalues $\lambda_1, \lambda_2, \dots, \lambda_N$. The energy of such a graph is defined as $$\mathcal{E}(G) = \sum_{i=1}^N \left| \lambda_i - \frac{\sigma}{N} \right|.$$

For the modified divisor prime graph $G^*_{Dp}(n)$, as established above, the only vertex satisfying the self-loop condition is $1$. Therefore, the number of self-loops in $G^*_{Dp}(n)$ is always exactly $\sigma = 1$. Substituting $\sigma = 1$ and $N = \tau(n)$ (where $\tau(n)$ is the total number of positive divisors of $n$) into the above formulas provides $$\mathcal{E}(G^*_{Dp}(n)) = \sum_{i=1}^N \left| \lambda_i - \frac{1}{\tau(n)} \right|.$$

Graph energy provides a macroscopic view of a network's total spectral contribution, but a complete structural picture requires examining its distribution at the microscopic level. Motivated by this need to localize spectral properties, Arizmendi et al. \cite{arizmendi2018} introduced the concept of \emph{vertex energy} to explicitly quantify the contribution of each individual vertex.

Mathematically, the vertex energy $\mathcal{E}_G(v_i)$ of a vertex $v_i \in V(G)$ is defined as the $i$-th diagonal entry of the matrix absolute value of the adjacency matrix, denoted as $|A| = (AA^*)^{1/2}$. Thus, we have$$\mathcal{E}_G(v_i) = |A|_{i,i}.$$ This definition preserves the fundamental property that the macroscopic graph energy is simply the aggregation of these local microscopic contributions:$$\mathcal{E}(G) = \sum_{i=1}^n \mathcal{E}_G(v_i).$$

The study of vertex energy has experienced rapid growth in recent years \cite{arizmendi2018,arizmendi2019,arizmendi2024}. For simple graphs, this localized energy can be explicitly computed via spectral decomposition. As highlighted by Gutman \emph{et al. }\cite{gutmantutorial}, if $u_1, u_2, \dots, u_n$ are orthonormal eigenvectors corresponding to eigenvalues $\lambda_1, \dots, \lambda_n$, the energy of vertex $v_i$ is $$\mathcal{E}_G(v_i) = \sum_{j=1}^n |\lambda_j| u_{ji}^2.$$

Crucially, because the modified divisor prime graph inherently features a self-loop, we introduce the generalized formulation of vertex energy. For a graph $G$ of order $N$ containing $\sigma$ self-loops with adjacency matrix $A$, the vertex energy is defined as 
\begin{equation*}
\mathcal{E}_G(v_i) = \left| A - \frac{\sigma}{N} I \right|_{ii}.
\end{equation*}
Using spectral decomposition, this yields an explicit computational formula:
\begin{equation*}
\mathcal{E}_G(v_i) = \sum_{j=1}^n  \left| \lambda_j - \frac{\sigma}{N} \right|u_{ji}^2.
\end{equation*}

In the sections that follow, we present exact computations for these spectral invariants. We first establish a baseline by calculating the energy of the divisor prime graph specifically for prime powers $p^a$. Building on this foundation, we then systematically analyze the modified divisor prime graph. We derive expressions for both its total energy and its individual vertex energies, initially for prime powers and ultimately extending these results to any general integer $n$. 

Prior to establishing our main theorems, we state and prove several preliminary lemmas that will serve as the foundation for our analysis.

Let $A \in R^{m \times n}$, $B \in R^{p \times q}$. Then the \textit{Kronecker product} (or \textit{tensor product}) of $A$ and $B$ is defined as the matrix
$$A \otimes B = \begin{bmatrix}
a_{11}B & \cdots & a_{1n}B \\
\vdots & \ddots & \vdots \\
a_{m1}B & \cdots & a_{mn}B
\end{bmatrix}.$$

\begin{lemma} \label{lem: ten} \cite{horn}
     Let $A \in M^m$ and $B \in M^n$. Furthermore, let $\lambda$ be an eigenvalue of matrix $A$ with corresponding eigenvector $x$ and $\mu$ be an eigenvalue of matrix $B$ with corresponding eigenvector $y$. Then $\lambda\mu$ is an eigenvalue of $A \otimes B$ with corresponding eigenvector $x \otimes y$.
\end{lemma}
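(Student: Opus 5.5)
The plan is to prove Lemma~\ref{lem: ten} directly from the \emph{mixed-product property} of the Kronecker product:
\[
(A \otimes B)(C \otimes D) = (AC) \otimes (BD),
\]
which holds whenever the products $AC$ and $BD$ are defined. I will first verify this identity, then apply it with $C = x$ and $D = y$ viewed as column vectors. Finally I will check that $x \otimes y$ is a legitimate (nonzero) eigenvector.

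\textbf{Step 1: the mixed-product property.} I verify it blockwise. Take $A \in M^m$, $C$ an $m \times k$ matrix, $B \in M^n$, and $D$ an $n \times l$ matrix. The $(i,j)$ block of $A \otimes B$ is $a_{ij}B$, and the $(j,s)$ block of $C \otimes D$ is $c_{js}D$. Block multiplication then gives the $(i,s)$ block of the product as
\[
\sum_{j=1}^m a_{ij} c_{js}\, BD = (AC)_{is}\, BD,
\]
which is exactly the $(i,s)$ block of $(AC) \otimes (BD)$.

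\textbf{Step 2: the eigenvalue relation.} Specialize to $k = l = 1$, so that $C = x \in \mathbb{C}^m$ and $D = y \in \mathbb{C}^n$. Using $Ax = \lambda x$ and $By = \mu y$,
\[
(A \otimes B)(x \otimes y) = (Ax) \otimes (By) = (\lambda x) \otimes (\mu y) = \lambda\mu\,(x \otimes y).
\]
The last equality uses bilinearity of $\otimes$, which is immediate from the definition: every entry of $(\lambda x) \otimes (\mu y)$ has the form $\lambda x_i \mu y_j$.

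\textbf{Step 3: nonvanishing of $x \otimes y$.} The only point needing a remark is that $x \otimes y \neq 0$, since otherwise it would not qualify as an eigenvector. Because $x \neq 0$ and $y \neq 0$, choose indices with $x_i \neq 0$ and $y_j \neq 0$. The entry of $x \otimes y$ in position $(i-1)n + j$ is $x_i y_j \neq 0$. Hence $\lambda\mu$ is an eigenvalue of $A \otimes B$ with eigenvector $x \otimes y$.

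None of these steps is a real obstacle. The only place where care is needed is the index bookkeeping in the block multiplication of Step~1.
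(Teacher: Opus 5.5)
Your proof is correct, and it is the standard argument. The paper gives no proof of its own and only cites Horn--Johnson, where this lemma is derived from the mixed-product property $(A\otimes B)(C\otimes D)=(AC)\otimes(BD)$ exactly as you do, and your check that $x\otimes y\neq 0$ properly completes it. As a side remark, the later use of this lemma in Theorem~\ref{thm: En} needs the stronger fact that the products $\lambda_i\mu_j$ exhaust the spectrum with multiplicities; for symmetric $A,B$ this follows from the same identity, because $\{x_i\otimes y_j\}$ is then an orthonormal basis, but it goes beyond the statement you were asked to prove.
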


\begin{lemma} \label{lem: loopve}
Let $G$ be a graph of order $N$ with $\sigma$ self-loops. If $\mathcal{E}(v_i)$ is the vertex energy of vertex $v_i \in V(G)$ and $\mathcal{E}(G)$ is the total energy of the graph, then:
\begin{equation*}
    \sum_{i=1}^N \mathcal{E}(v_i) = \mathcal{E}(G).
\end{equation*}
\end{lemma}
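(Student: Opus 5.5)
The plan is to reduce the claim to the statement that the trace of a matrix equals the sum of its eigenvalues, applied to the matrix $M = \left| A - \frac{\sigma}{N} I \right|$.

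First, I will note that $A$ is the adjacency matrix of an undirected graph, possibly with loops, so it is real symmetric. Hence $B := A - \frac{\sigma}{N} I$ is also real symmetric. By the spectral theorem there is an orthonormal basis of eigenvectors $u_1, \dots, u_N$ of $A$ with eigenvalues $\lambda_1, \dots, \lambda_N$. These same vectors are eigenvectors of $B$, with eigenvalues $\lambda_j - \frac{\sigma}{N}$.

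Next, I will write $U = [u_1 \cdots u_N]$, which is orthogonal. Since $B$ is symmetric, $BB^* = B^2$, so $|B| = (B^2)^{1/2} = U \,\mathrm{diag}\!\left(\left|\lambda_j - \frac{\sigma}{N}\right|\right) U^{T}$. This justifies the computational formula $\mathcal{E}_G(v_i) = \sum_{j} \left|\lambda_j - \frac{\sigma}{N}\right| u_{ji}^2$ stated in the introduction.

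Summing over $i$ and exchanging the order of summation gives
\begin{equation*}
\sum_{i=1}^N \mathcal{E}(v_i) = \sum_{j=1}^N \left|\lambda_j - \frac{\sigma}{N}\right| \sum_{i=1}^N u_{ji}^2 = \sum_{j=1}^N \left|\lambda_j - \frac{\sigma}{N}\right|,
\end{equation*}
because each $u_j$ has unit norm. Equivalently, the left side is $\operatorname{tr}|B| = \operatorname{tr}\!\left(U D U^T\right) = \operatorname{tr}(D)$ by cyclicity of the trace. The right side is exactly the Gutman et al.\ definition of $\mathcal{E}(G)$ for graphs with $\sigma$ self-loops, which completes the argument.

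The main point needing care is the identification of $|B|$ through the spectral decomposition. The paper defines $|B| = (BB^*)^{1/2}$ as the unique positive semidefinite square root. So I must check that $U\,\mathrm{diag}(|\mu_j|)\,U^T$ is positive semidefinite and that its square is $B^2$; uniqueness of the positive semidefinite square root then identifies it with $|B|$. I must also use that the eigenvalues of the shifted matrix are exactly the shifted eigenvalues with the same eigenvectors. Both facts are standard, so no real obstacle is expected.
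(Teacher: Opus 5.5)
Your proof is correct and follows the paper's argument: sum the spectral formula $\mathcal{E}_G(v_i)=\sum_j |\lambda_j-\frac{\sigma}{N}|\,u_{ji}^2$ over $i$, exchange the two sums, and use $\|u_j\|=1$. The one addition is that you also derive that spectral formula from the definition $|A-\frac{\sigma}{N}I|$, via the shared eigenvectors of $A$ and $A-\frac{\sigma}{N}I$ and uniqueness of the positive semidefinite square root, and you note the equivalent trace identity; the paper simply takes the formula as given.
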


\begin{proof}
By definition, the total energy of a graph with $\sigma$ self-loops, $\mathcal{E}(G)$, is given by:
\begin{equation*}
   \mathcal{E}(G) = \sum_{j=1}^N \left| \lambda_j - \frac{\sigma}{N} \right|.
\end{equation*}
Taking the sum of the vertex energies over all vertices $v_i \in V(G)$, we get:
\begin{align*}
    \sum_{i=1}^N \mathcal{E}(v_i) &= \sum_{i=1}^N \left( \sum_{j=1}^N  \left| \lambda_j - \frac{\sigma}{N} \right| u_{ij}^2\right) .\\
    &= \sum_{j=1}^N \left( \left| \lambda_j - \frac{\sigma}{N} \right| \sum_{i=1}^N u_{ij}^2 \right).
\end{align*}
Because the eigenvectors $u_j$ form an orthonormal basis, each is a unit vector. Therefore, its squared Euclidean norm is strictly equal to $1$, meaning $\sum_{i=1}^N u_{ij}^2 = \|u_j\|^2 = 1$. Substituting this into our equation yields:
\begin{equation*}
    \sum_{i=1}^N \mathcal{E}(v_i) = \sum_{j=1}^N \left( \left| \lambda_j - \frac{\sigma}{N} \right| \cdot 1 \right) = \sum_{j=1}^N \left| \lambda_j - \frac{\sigma}{N} \right| = \mathcal{E}(G).
\end{equation*}

\end{proof}

\begin{lemma} \label{lem: schur} \cite{horn}
Let $M = \begin{pmatrix} A & B \\ C & D \end{pmatrix}$ be a block matrix where $A$ is invertible. Then 
\[
\det(M) = \det(A)\det(D - C A^{-1} B).
\]
\end{lemma}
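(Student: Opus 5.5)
This is the standard Schur complement formula, so I will prove it by a block LU-type factorization of $M$ followed by multiplicativity of the determinant. Throughout, $A$ is $k\times k$ and invertible, $D$ is $m\times m$, and $B$, $C$ have the compatible sizes $k\times m$ and $m\times k$. The key identity I aim to verify is
\[
\begin{pmatrix} A & B \\ C & D \end{pmatrix}
=
\begin{pmatrix} I_k & 0 \\ C A^{-1} & I_m \end{pmatrix}
\begin{pmatrix} A & B \\ 0 & D - C A^{-1} B \end{pmatrix}.
\]
To check it, multiply out the blocks on the right.
\begin{itemize}
\item The top row gives $A$ and $B$.
\item The bottom-left block is $CA^{-1}A = C$.
\item The bottom-right block is $CA^{-1}B + D - CA^{-1}B = D$.
\end{itemize}
This step uses only the existence of $A^{-1}$, which is exactly the hypothesis.

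Next I take determinants of both sides and use $\det(XY)=\det(X)\det(Y)$. The first factor is block lower triangular with identity diagonal blocks, so its determinant is $1$. The second factor is block upper triangular, and its determinant is $\det(A)\det(D-CA^{-1}B)$. Multiplying the two gives the claimed formula.

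The only step that needs care is the fact that a block triangular matrix has determinant equal to the product of the determinants of its diagonal blocks. I would justify this by Laplace expansion along the first columns, inductively on $k$. Alternatively, factor
\[
\begin{pmatrix} A & B \\ 0 & S \end{pmatrix}
=
\begin{pmatrix} I_k & 0 \\ 0 & S \end{pmatrix}
\begin{pmatrix} A & B \\ 0 & I_m \end{pmatrix}.
\]
The first factor has determinant $\det S$ by expansion along the first $k$ rows. The second has determinant $\det A$ by expansion along the last $m$ rows. I expect no genuine obstacle: the result is classical, as the citation to \cite{horn} indicates, and the proof reduces to the block-multiplication check above.
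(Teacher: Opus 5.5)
Your proof is correct and complete. The factorization $M=\begin{pmatrix} I_k&0\\ CA^{-1}&I_m\end{pmatrix}\begin{pmatrix} A&B\\ 0&D-CA^{-1}B\end{pmatrix}$ and your justification of the block-triangular determinant form the standard argument in Horn and Johnson, which the paper simply cites without proof.

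One small remark: where the paper uses this lemma for the characteristic polynomial of $G^*_{Dp}(p^a)$, it actually applies the mirror form $\det M=\det D\,\det(A-BD^{-1}C)$ with $D=\lambda I_a$ invertible. Your method gives that version just as easily via $M=\begin{pmatrix} I&BD^{-1}\\ 0&I\end{pmatrix}\begin{pmatrix} A-BD^{-1}C&0\\ C&D\end{pmatrix}$.
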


\begin{lemma}\label{lem: tensor}
Let $A(n)$ be the adjacency matrix of the modified divisor prime graph
$G^*_{Dp}(n)$, where $n = p_1^{a_1}p_2^{a_2}\dots p_r^{a_r}$. After
identifying each divisor $d=\prod_{i=1}^r p_i^{b_i}$ with its exponent vector
$(b_1,\dots,b_r)$, then the adjacency matrix satisfies
\[
A(n) = A(p_1^{a_1}) \otimes A(p_2^{a_2}) \otimes \cdots \otimes A(p_r^{a_r}).
\]
\end{lemma}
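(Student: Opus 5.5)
The plan is to compare the two matrices entry by entry. Both are indexed by the same set once every divisor is identified with its exponent vector. By the Chinese remainder theorem and unique factorization, $d \mapsto (b_1,\dots,b_r)$ is a bijection from $D(n)$ onto $\prod_{i=1}^r \{0,1,\dots,a_i\}$. Each factor $A(p_i^{a_i})$ is indexed by $\{0,\dots,a_i\}$, corresponding to the divisors $p_i^{0},\dots,p_i^{a_i}$.

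I will order the exponent vectors lexicographically. With this ordering, the rows and columns of a Kronecker product $B_1\otimes\cdots\otimes B_r$ are indexed by tuples, and the defining block formula iterated gives
\[
(B_1\otimes\cdots\otimes B_r)_{(b_1,\dots,b_r),(c_1,\dots,c_r)} = \prod_{i=1}^r (B_i)_{b_i,c_i}.
\]
This is the standard entrywise description, which I will justify by a short induction on $r$ using associativity of $\otimes$. It therefore suffices to show that, for $d=\prod p_i^{b_i}$ and $e=\prod p_i^{c_i}$,
\[
A(n)_{d,e} = \prod_{i=1}^r A(p_i^{a_i})_{b_i,c_i}.
\]

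Next I will describe each factor explicitly. In $G^*_{Dp}(p^a)$ the vertices $p^b$ and $p^c$ are adjacent exactly when $\gcd(p^b,p^c)=1$, that is, when $\min(b,c)=0$. The loop at $1$ is retained, so the $(0,0)$ entry equals $1$ as well. Hence $A(p^a)_{b,c}=1$ if $\min(b,c)=0$ and $0$ otherwise. Similarly, in $G^*_{Dp}(n)$ the loop at $1$ is the only diagonal $1$, which agrees with $\gcd(d,d)=1 \iff d=1$. So $A(n)_{d,e}=1$ exactly when $\gcd(d,e)=1$, including the case $d=e=1$.

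Finally, since $\gcd(d,e)=\prod p_i^{\min(b_i,c_i)}$, we have $\gcd(d,e)=1$ if and only if $\min(b_i,c_i)=0$ for every $i$. Because all entries are $0$ or $1$, this says exactly that every factor $A(p_i^{a_i})_{b_i,c_i}$ equals $1$, which is equivalent to the product being $1$. That completes the entrywise equality.

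The main obstacle is making sure the diagonal entries are handled correctly. The self-loop convention is precisely what makes the identity work: in the unmodified divisor prime graph the $(1,1)$ entry would be $0$ while the tensor factors would also have zero corners, and the products would then give the wrong off-diagonal entries. So I will state explicitly that $A(p^a)_{0,0}=1$ because of the loop. Apart from that, the argument only requires stating the indexing convention for the iterated Kronecker product clearly.
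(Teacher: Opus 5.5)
Your proposal is correct and follows the paper's own proof. Both compare entries using $\bigl(\bigotimes_i A(p_i^{a_i})\bigr)_{u,v}=\prod_i A(p_i^{a_i})_{b_i,c_i}$ together with $\gcd(u,v)=1 \iff \min(b_i,c_i)=0$ for every $i$; you are more explicit than the paper about the lexicographic indexing and about why the retained loop (the $(0,0)$ entry equal to $1$) is needed, though the Chinese remainder theorem is unnecessary since unique factorization alone gives the bijection.
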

\begin{proof}
Let
\(
u=\prod_{i=1}^r p_i^{b_i},\;v=\prod_{i=1}^r p_i^{c_i},
\text{ for } 0\le b_i,c_i\le a_i.
\)
Then
\(
\gcd(u,v)=1 \iff \min(b_i,c_i)=0, \;\forall i.
\)
For each $i$,
\[
A(p_i^{a_i})_{b_i,c_i}=
\begin{cases}
1, & \min(b_i,c_i)=0,\\
0, & \text{otherwise}.
\end{cases}
\]
Hence, by the defining property of the Kronecker product
\[
\left(\bigotimes_{i=1}^r A(p_i^{a_i})\right)_{u,v}
=\prod_{i=1}^r A(p_i^{a_i})_{b_i,c_i},
\]
which equals $1$ exactly when $\min(b_i,c_i)=0$ for all $i$, and equals $0$
otherwise. Therefore
\[
A(n)=A(p_1^{a_1})\otimes A(p_2^{a_2})\otimes\cdots\otimes A(p_r^{a_r}).
\]
\end{proof}

\begin{lemma} \label{lem: prod_to_sum}For any positive integer $r$ and real numbers $\alpha_i, \beta_i$ ($1 \le i \le r$):$$\sum_{\gamma_1\in\{\alpha_1,\beta_1\}}\cdots\sum_{\gamma_r\in\{\alpha_r,\beta_r\}} \prod_{i=1}^r |\gamma_i| = \prod_{i=1}^r (|\alpha_i|+|\beta_i|).$$
\end{lemma}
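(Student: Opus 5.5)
The identity follows by induction on $r$, using the distributive law for finite sums and the multiplicativity of the absolute value, $|xy| = |x||y|$. Nothing from the earlier lemmas is needed. The content is that a product of $r$ two-term sums expands into $2^r$ terms, one for each choice $\gamma_i \in \{\alpha_i,\beta_i\}$. I will state the induction in the form proved: for each $r \ge 1$,
$$S_r := \sum_{\gamma_1\in\{\alpha_1,\beta_1\}}\cdots\sum_{\gamma_r\in\{\alpha_r,\beta_r\}} \prod_{i=1}^r |\gamma_i| = \prod_{i=1}^r (|\alpha_i|+|\beta_i|).$$

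For the base case $r=1$, the left side is $|\alpha_1|+|\beta_1|$, which is the right side.

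For the inductive step, assume the identity for $r-1$. In $S_r$, the factor $|\gamma_r|$ depends only on the innermost summation index. Write $\prod_{i=1}^r|\gamma_i| = \Big(\prod_{i=1}^{r-1}|\gamma_i|\Big)\,|\gamma_r|$ and pull the first $r-1$ factors out of the innermost sum:
$$S_r = \sum_{\gamma_1}\cdots\sum_{\gamma_{r-1}} \Big(\prod_{i=1}^{r-1}|\gamma_i|\Big)\sum_{\gamma_r\in\{\alpha_r,\beta_r\}}|\gamma_r| = (|\alpha_r|+|\beta_r|)\, S_{r-1}.$$
The inner sum is the constant $|\alpha_r|+|\beta_r|$, so it factors out of all the remaining sums. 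The induction hypothesis applied to $S_{r-1}$ then gives the claimed product.

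There is no real obstacle. The only care needed is with the bookkeeping of the nested sums, so that the innermost sum over $\gamma_r$ is genuinely independent of the outer indices before it is factored out. It is also worth noting why the lemma will be used later. By Lemma~\ref{lem: tensor} and Lemma~\ref{lem: ten}, the eigenvalues of $A(n)$ are the products $\prod_i \gamma_i$, where $\gamma_i$ runs over the eigenvalues of $A(p_i^{a_i})$. Summing the absolute values of these products is exactly the left-hand side here. The identity itself, however, is a general statement about real numbers.
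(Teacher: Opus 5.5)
Your proof is correct and follows essentially the same route as the paper: induction on $r$, pulling the sum over the last index $\gamma_r$ out as the constant factor $|\alpha_r|+|\beta_r|$, then applying the hypothesis to the remaining $r-1$ nested sums. Like the paper, you read $\sum_{\gamma_i\in\{\alpha_i,\beta_i\}}$ as a sum over the two choices rather than over a set, so it still equals $|\alpha_i|+|\beta_i|$ even when $\alpha_i=\beta_i$; this is the intended meaning.
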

\begin{proof}
We proceed by induction on $r$. For $r=1$, the identity holds since $\sum_{\gamma_1 \in \{\alpha_1, \beta_1\}} |\gamma_1| = |\alpha_1| + |\beta_1|$. Assume the identity holds for $k$. For $r=k+1$, we factor the $(k+1)$-th term:
\begin{align*}
\sum_{\gamma_1\in{\alpha_1,\beta_1}}\dots\sum_{\gamma_{k+1}\in{\alpha_{k+1},\beta_{k+1}}} \prod_{i=1}^{k+1} |\gamma_i| 
&= \sum_{\gamma_{k+1} \in {\alpha_{k+1}, \beta_{k+1}}} |\gamma_{k+1}| \left( \sum_{\gamma_1}\dots\sum_{\gamma_k} \prod_{i=1}^k |\gamma_i| \right) \\
&= (|\alpha_{k+1}| + |\beta_{k+1}|) \prod_{i=1}^k (|\alpha_i|+|\beta_i|) \\
&= \prod_{i=1}^{k+1} (|\alpha_i|+|\beta_i|).
\end{align*}
The identity follows for all $r \ge 1$ by mathematical induction.
\end{proof}

\section{Main results}

\begin{theorem}
Let $n = p^a$, where $p$ is a prime number and $a$ is a positive integer. Then the energy is $\mathcal{E}(G_{Dp}(p^a)) = 2\sqrt{a}$.
\end{theorem}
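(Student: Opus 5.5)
The plan is to identify the graph explicitly and then read off its spectrum. The divisors of $p^a$ are $1, p, p^2, \dots, p^a$. For $i,j \ge 1$ we have $\gcd(p^i,p^j) \ge p > 1$, so none of these are adjacent. The vertex $1$ is adjacent to every other divisor, and in the standard divisor prime graph its loop is neglected. Hence $G_{Dp}(p^a)$ is the star $K_{1,a}$ on $a+1$ vertices, with centre $1$. It is a simple graph with $\sigma = 0$, so its energy is just $\sum |\lambda_i|$.

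Next I compute the spectrum of $K_{1,a}$. Order the vertices as $1, p, \dots, p^a$. The adjacency matrix is
\[
A=\begin{pmatrix} 0 & \mathbf{1}^T \\ \mathbf{1} & O_a \end{pmatrix},
\]
which has rank $2$. So $0$ is an eigenvalue with multiplicity at least $a-1$. The remaining two eigenvalues are determined by two trace conditions:
\begin{itemize}
\item $\operatorname{tr}A = 0$, so $\lambda_1 + \lambda_2 = 0$;
\item $\operatorname{tr}A^2 = 2a$ (twice the number of edges), so $\lambda_1^2 + \lambda_2^2 = 2a$.
\end{itemize}
This gives $\lambda = \pm\sqrt{a}$.

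As a cross-check, the eigenvalues can also be obtained from the characteristic polynomial $\det(\lambda I - A)$ via Lemma~\ref{lem: schur}, taking the invertible block to be $\lambda I_a$ for $\lambda \ne 0$. This gives
\[
\det(\lambda I - A) = \lambda^{a}\left(\lambda - \frac{a}{\lambda}\right) = \lambda^{a-1}(\lambda^2 - a),
\]
and the identity extends to $\lambda = 0$ by continuity.

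Summing absolute values then gives $\mathcal{E}(G_{Dp}(p^a)) = |\sqrt{a}| + |-\sqrt{a}| + 0 = 2\sqrt{a}$.

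The only point needing care is the first step. One must confirm that the loop at $1$ is excluded, since this is the unmodified divisor prime graph and so no $\sigma/N$ shift appears. One must also confirm that no two nontrivial prime powers are adjacent. Once that is settled, the spectral computation is routine.
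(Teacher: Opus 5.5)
Your proposal is correct and follows the same route as the paper: you identify $G_{Dp}(p^a)$ as the star $K_{1,a}$ (loop at $1$ discarded, so there is no $\sigma/N$ shift) and sum the absolute values of its eigenvalues $\pm\sqrt{a}$ and $0$ with multiplicity $a-1$. The only difference is that you derive the star's spectrum, via the rank and trace identities and cross-checked with the Schur complement, whereas the paper simply quotes it as known.
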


\begin{proof}
By the definition of the divisor prime graph, the vertex set $V = \{1, p, p^2, \dots, p^a\}$. Thus, the number of vertices is $r = a+1$.

For the vertex $1$, $\gcd(1, p^i) = 1$ for all $1 \leq i \leq a$. Therefore, the vertex $1$ is adjacent to all other $a$ vertices in the graph.
For any two other distinct vertices $p^i$ and $p^j$ (where $1 \leq i < j \leq a$), $\gcd(p^i, p^j) = p^i > 1$. Thus, no two vertices in the subset $\{p, p^2, \dots, p^a\}$ are adjacent to each other.
This connectivity structure implies that $G_{Dp}(p^a)$ is isomorphic to the star graph $K_{1,a}$.
 
The adjacency spectrum of a star graph $K_{1,a}$ consists of the eigenvalues $\sqrt{a}$ and $-\sqrt{a}$, along with $0$ having a multiplicity of $a-1$.
Therefore, the energy is :
\[
\mathcal{E}(G_{Dp}(p^a)) = |\sqrt{a}| + |-\sqrt{a}| + (a-1)|0| = 2\sqrt{a}.
\]
\end{proof}

\begin{theorem} \label{thm: Ep^a}
Let $p$ be a prime number and let $a$ be a positive integer. Then
\[
\mathcal{E}(G^*_{Dp}(p^a))=\sqrt{4a+1}+\frac{a-1}{a+1}.
\]
\end{theorem}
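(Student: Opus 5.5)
Since $1$ is adjacent to itself and to every $p^i$, while no two of $p,\dots,p^a$ are adjacent, $G^*_{Dp}(p^a)$ is the star $K_{1,a}$ with a loop added at the centre. Ordering the vertices as $1,p,\dots,p^a$, its adjacency matrix is
\[
A=\begin{pmatrix} 1 & \mathbf{j}^T\\ \mathbf{j} & O_a\end{pmatrix},
\]
where $\mathbf{j}$ is the all-ones vector of length $a$. Here $N=a+1$ and $\sigma=1$, so the task is to find the spectrum of $A$ and then evaluate $\sum_i |\lambda_i-\tfrac{1}{a+1}|$.

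First, the spectrum. The matrix $A$ has rank $2$, since its rows are spanned by $(1,\mathbf{j}^T)$ and $(1,\mathbf{0}^T)$. Hence $0$ is an eigenvalue of multiplicity $a-1$, with eigenvectors $(0,x)$ where $x\perp\mathbf{j}$. The two remaining eigenvalues come from the invariant subspace spanned by $e_1$ and $(0,\mathbf{j})$. On this subspace $A$ acts as $\begin{pmatrix}1& a\\ 1&0\end{pmatrix}$, whose characteristic polynomial is $\lambda^2-\lambda-a$. Equivalently, one can compute $\det(\lambda I-A)$ with Lemma \ref{lem: schur}, taking $D=\lambda I_a$ as the invertible block; this gives $\lambda^{a-1}(\lambda^2-\lambda-a)$. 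Thus
\[
\lambda_{\pm}=\frac{1\pm\sqrt{4a+1}}{2}.
\]

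Second, the energy sum. Each of the $a-1$ zero eigenvalues contributes $\frac{1}{a+1}$, for a total of $\frac{a-1}{a+1}$. For the two nonzero eigenvalues I need signs. Since $\sqrt{4a+1}\ge\sqrt5>1$, we have $\lambda_-<0<\frac{1}{a+1}$. Also $\lambda_+>1>\frac{1}{a+1}$. Therefore
\[
\left|\lambda_+-\tfrac{1}{a+1}\right|+\left|\lambda_- -\tfrac{1}{a+1}\right|=\lambda_+-\lambda_-=\sqrt{4a+1}.
\]
Adding the two contributions gives $\sqrt{4a+1}+\frac{a-1}{a+1}$, as claimed.

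The only delicate point is this sign check. It is what makes the shift $\frac{1}{a+1}$ cancel between the two nontrivial eigenvalues, and it holds for every $a\ge1$ by the bound $\sqrt{4a+1}\ge\sqrt5$. The rest is routine.
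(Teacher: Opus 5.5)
Your proof is correct and follows essentially the same route as the paper: you identify $G^*_{Dp}(p^a)$ as $K_{1,a}$ with a loop at the centre, obtain the characteristic polynomial $\lambda^{a-1}(\lambda^2-\lambda-a)$ (the paper uses exactly your Schur-complement computation with $\lambda I_a$ as the invertible block), and then remove the absolute values with a sign check. Your rank/invariant-subspace derivation of the spectrum, and your sign check via $\lambda_-<0$ and $\lambda_+>1>\frac{1}{a+1}$, are slightly cleaner than the paper's equivalent inequality $(a+1)\sqrt{4a+1}>a-1$, but they do not change the argument.
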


\begin{proof}
The vertex set is
\(
V=\{1,p,p^2,\dots,p^a\}.
\)
Hence $|V|=a+1$. Since
\(
\gcd(1,p^i)=1 \; (0\le i\le a),
\)
the vertex $1$ is adjacent to every other vertex and has one self-loop. Also,
\(
\gcd(p^i,p^j)\ge p>1.
\)
Thus $G^*_{Dp}(p^a)$ is $K_{1,a}$ with one loop at its center. With ordering
\(
(1,p,p^2,\dots,p^a),
\)
the adjacency matrix is
\[
A=\begin{pmatrix}1 & \mathbf{1}^T\\\mathbf{1} & O_a\end{pmatrix},
\]
where $\mathbf{1}$ is the $a\times1$ all-one vector. Set
\[
M=\lambda I_{a+1}-A=\begin{pmatrix}\lambda-1 & -\mathbf{1}^T\\-\mathbf{1} & \lambda I_a\end{pmatrix}.
\]
For $\lambda\neq0$, Lemma \ref{lem: schur} gives
\[
\det(M)=\det(\lambda I_a)\left((\lambda-1)-(-\mathbf{1}^T)(\lambda I_a)^{-1}(-\mathbf{1})\right).
\]
Now
\(
(-\mathbf{1}^T)(\lambda I_a)^{-1}(-\mathbf{1})=\frac{a}{\lambda},
\)
hence
\[
\det(M)=\lambda^a\left(\lambda-1-\frac{a}{\lambda}\right)=\lambda^{a-1}(\lambda^2-\lambda-a).
\]
Therefore the non zero eigenvalues are
\[
\lambda_1=\frac{1+\sqrt{4a+1}}{2},\qquad\lambda_2=\frac{1-\sqrt{4a+1}}{2},
\]
and rest are $0$ with multiplicity $a-1$.
\begin{align*}
\mathcal{E}(G^*_{Dp}(p^a))&=\sum_{i=1}^{a+1}\left|\lambda_i-\frac{1}{a+1}\right|\\
&=(a-1)\left|\frac{-1}{a+1}\right|+\left|\frac{1+\sqrt{4a+1}}{2}-\frac{1}{a+1}\right|+\left|\frac{1-\sqrt{4a+1}}{2}-\frac{1}{a+1}\right|\\
&=\frac{a-1}{a+1}+\left|\frac{a-1+(a+1)\sqrt{4a+1}}{2(a+1)}\right|+\left|\frac{a-1-(a+1)\sqrt{4a+1}}{2(a+1)}\right|.
\end{align*}
Since
\(
(a+1)\sqrt{4a+1}>a-1,
\)
\begin{align*}
\mathcal{E}(G^*_{Dp}(p^a))&=\frac{a-1}{a+1}+\frac{a-1+(a+1)\sqrt{4a+1}}{2(a+1)}+\frac{(a+1)\sqrt{4a+1}-(a-1)}{2(a+1)}\\
&=\frac{a-1}{a+1}+\sqrt{4a+1}.
\end{align*}
\end{proof}

\begin{theorem} \label{thm: En}
Let $n = p_1^{a_1}p_2^{a_2}\dots p_r^{a_r}$ be the prime factorization of an integer $n > 1$, where $p_i$ are distinct primes and $a_i \ge 1$. Let $\tau(n) = \prod_{i=1}^r (a_i+1)$ denote the total number of positive divisors of $n$. The energy of the modified divisor prime graph $G^*_{Dp}(n)$ is given by
\[
\mathcal{E}(G^*_{Dp}(n)) = \prod_{i=1}^r \sqrt{4a_i+1} + 1 - \frac{2^r}{\tau(n)}.
\]
\end{theorem}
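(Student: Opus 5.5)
The plan is to combine Lemma~\ref{lem: tensor} with the spectrum of $A(p^a)$ computed in the proof of Theorem~\ref{thm: Ep^a}. For each $i$, $A(p_i^{a_i})$ is real symmetric, so it has an orthonormal eigenbasis. Its eigenvalues are
\[
\alpha_i=\frac{1+\sqrt{4a_i+1}}{2},\qquad \beta_i=\frac{1-\sqrt{4a_i+1}}{2},
\]
together with $0$ of multiplicity $a_i-1$. By Lemma~\ref{lem: ten}, the Kronecker products of these eigenvectors form a full eigenbasis of $A(n)=\bigotimes_i A(p_i^{a_i})$, because there are $\tau(n)$ of them and they are orthonormal. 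Hence the spectrum of $A(n)$, counted with multiplicity, is exactly the multiset of products $\prod_i \mu_i$, where each $\mu_i$ runs over the spectrum of $A(p_i^{a_i})$.

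Any product containing a zero factor vanishes. So the nonzero eigenvalues are the $2^r$ products $\gamma=\prod_i \gamma_i$ with $\gamma_i\in\{\alpha_i,\beta_i\}$. Every other eigenvalue is $0$, and there are $\tau(n)-2^r$ of them. Since $G^*_{Dp}(n)$ has exactly one loop, the energy is
\[
\mathcal{E}(G^*_{Dp}(n))=\frac{\tau(n)-2^r}{\tau(n)}+\sum_{\gamma}\Bigl|\gamma-\frac{1}{\tau(n)}\Bigr| .
\]

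The key step, which I expect to be the main obstacle, is removing the absolute values in the second sum. I will show that $|\gamma|>1/\tau(n)$ for every nonzero $\gamma$. Then
\[
\Bigl|\gamma-\frac{1}{\tau(n)}\Bigr|=|\gamma|-\frac{\operatorname{sgn}(\gamma)}{\tau(n)} .
\]
To prove the bound, note that $\alpha_i>|\beta_i|$ and $|\beta_i|=(\sqrt{4a_i+1}-1)/2\ge(\sqrt5-1)/2>1/2$. This gives $|\gamma|>2^{-r}\ge 1/\tau(n)$, where the last inequality holds because $\tau(n)=\prod_i(a_i+1)\ge 2^r$.

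Summing the two resulting pieces separately finishes the proof. For the absolute values, Lemma~\ref{lem: prod_to_sum} gives
\[
\sum_\gamma|\gamma|=\prod_i(|\alpha_i|+|\beta_i|)=\prod_i(\alpha_i-\beta_i)=\prod_i\sqrt{4a_i+1}.
\]
For the signs, the same factorization argument, with $\operatorname{sgn}$ in place of $|\cdot|$, gives
\[
\sum_\gamma\operatorname{sgn}(\gamma)=\prod_i\bigl(\operatorname{sgn}\alpha_i+\operatorname{sgn}\beta_i\bigr)=\prod_i(1-1)=0 .
\]
Therefore
\[
\mathcal{E}(G^*_{Dp}(n))=1-\frac{2^r}{\tau(n)}+\prod_{i=1}^r\sqrt{4a_i+1},
\]
as claimed. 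As a consistency check, the case $r=1$ recovers Theorem~\ref{thm: Ep^a}.
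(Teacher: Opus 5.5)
Your proposal is correct and follows essentially the same route as the paper. You use the Kronecker decomposition of $A(n)$, count $\tau(n)-2^r$ zero eigenvalues, show $|\Lambda|>1/\tau(n)$ for every nonzero eigenvalue, cancel the $1/\tau(n)$ terms because positive and negative nonzero eigenvalues are equinumerous, and finish with Lemma~\ref{lem: prod_to_sum}. Your small variations are all sound and make explicit points the paper leaves implicit: the orthonormal Kronecker eigenbasis justifying the full spectrum with multiplicity, the uniform bound $|\beta_i|>1/2$ combined with $\tau(n)\ge 2^r$, and $\sum\operatorname{sgn}(\gamma)=\prod_i(1-1)=0$ via multiplicativity instead of parity counting.
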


\begin{proof}
Let $A(n)$ be the adjacency matrix of $G^*_{Dp}(n)$. By
Lemma~\ref{lem: tensor},
\(
A(n) = \bigotimes_{i=1}^r A(p_i^{a_i}).
\) By Theorem \ref{thm: Ep^a},
for each $i$ the eigenvalues of $A(p_i^{a_i})$ are
\[
\alpha_i = \frac{1+\sqrt{4a_i+1}}{2}, \quad \beta_i = \frac{1-\sqrt{4a_i+1}}{2},
\]
and $0$ with multiplicity $a_i-1$. Hence, by lemma \ref{lem: ten} every eigenvalue of $A(n)$ has the
form $\Lambda=\prod_{i=1}^r\gamma_i$, where
$\gamma_i\in\{\alpha_i,\beta_i,0\}$. Thus there are exactly $2^r$ nonzero
eigenvalues, and $0$ has multiplicity $\tau(n)-2^r$.
Since $\operatorname{tr}(A(p_i^{a_i}))=1$ for all $i$, it is obvious that $\operatorname{tr}(A(n))=1$ and $\mu_n= \frac{1}{\tau(n)}$, therefore zero eigenvalues contribute
\[
E_0 = (\tau(n) - 2^r) \left| 0 - \frac{1}{\tau(n)} \right| = 1 - \frac{2^r}{\tau(n)}.
\]
Let $S=\left\{\prod_{i=1}^r \gamma_i:\gamma_i\in\{\alpha_i,\beta_i\}\right\}$ be the set of nonzero eigenvalues, we have
\[
\left|\beta_i\right|=\frac{\sqrt{4a_i+1}-1}{2}>\frac{1}{a_i+1}.
\]
Therefore, every $\Lambda\in S$ strictly satisfies $|\Lambda|>\mu_n$. The energy contribution of these nonzero eigenvalues is:
\begin{align*}
E_{\neq 0} &= \sum_{\Lambda \in S, \Lambda > 0} (\Lambda - \mu_n) + \sum_{\Lambda \in S, \Lambda < 0} (\mu_n - \Lambda) \\
&= \sum_{\Lambda \in S, \Lambda > 0} \Lambda - \sum_{\Lambda \in S, \Lambda < 0} \Lambda - \mu_n \sum_{\Lambda \in S, \Lambda > 0} 1 + \mu_n \sum_{\Lambda \in S, \Lambda < 0} 1 \\
&= S^+ - S^- - \mu_n(k^+ - k^-),
\end{align*}
where $S^+$ and $S^-$ denote the sums of the positive and negative eigenvalues in $S$, and $k^+$ and $k^-$ denote the count of positive and negative eigenvalues in $S$, respectively.

Note that every $\Lambda \in S$ is a product of the form $\prod_{i=1}^r \gamma_i$ where $\gamma_i \in \{\alpha_i, \beta_i\}$. Because $\alpha_i > 0$ and $\beta_i < 0$, the sign of $\Lambda$ depends exclusively on the parity of the number of $\beta_i$ factors. Out of the $2^r$ total combinations, exactly half ($2^{r-1}$) contain an even number of $\beta_i$ terms (yielding $\Lambda > 0$), and exactly half ($2^{r-1}$) contain an odd number of $\beta_i$ terms (yielding $\Lambda < 0$). 

Thus, $k^+ = 2^{r-1}$ and $k^- = 2^{r-1}$. This means $k^+ - k^- = 0$, causing the $\mu_n$ terms to perfectly cancel out. This simplifies our equation to:
\(
E_{\neq 0} = S^+ - S^-.
\) Then
\(
S^+-S^-=\sum_{\Lambda\in S}|\Lambda|,
\)
since $S^-<0$. Moreover using lemma \ref{lem: prod_to_sum},
\[
\sum_{\Lambda\in S}|\Lambda|
=
\sum_{\gamma_1\in\{\alpha_1,\beta_1\}}\cdots
\sum_{\gamma_r\in\{\alpha_r,\beta_r\}}
\prod_{i=1}^r |\gamma_i|
=
\prod_{i=1}^r (|\alpha_i|+|\beta_i|).
\]
Now $\alpha_i>0$ and $\beta_i<0$, so
\(
|\alpha_i|+|\beta_i|=\alpha_i-\beta_i=\sqrt{4a_i+1}.
\)
Hence,
\[
E_{\neq 0}=\sum_{\Lambda\in S}|\Lambda|
=\prod_{i=1}^r \sqrt{4a_i+1}.
\]
Finally, the total energy is the sum of the contributions from the zero and nonzero eigenvalues:
\[
\mathcal{E}(G^*_{Dp}(n)) = E_0 + E_{\neq 0} = \prod_{i=1}^r \sqrt{4a_i+1} + 1 - \frac{2^r}{\tau(n)}.
\]
\end{proof}

\begin{theorem}
Let $\mathcal{G} \cong G^*_{Dp}(p^a)$ be the modified divisor prime graph for
some prime $p$ and integer $a \ge 1$. Then the vertex energies satisfy
\[
\mathcal{E}_{\mathcal{G}}(1) = \frac{2a^2+3a}{(a+1)\sqrt{4a+1}}
\qquad \text{and} \qquad
\mathcal{E}_{\mathcal{G}}(p^i) = \frac{2a^2+2a+1}{a(a+1)\sqrt{4a+1}} + \frac{a-1}{a(a+1)}, \quad 1 \le i \le a.
\]
\end{theorem}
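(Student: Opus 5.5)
The plan is to compute the vertex energies directly from the spectral formula
\[
\mathcal{E}_{\mathcal{G}}(v_i)=\sum_j \left|\lambda_j-\frac{1}{a+1}\right|u_{ji}^2 .
\]
For this we need an explicit orthonormal eigenbasis of the matrix $A=\begin{pmatrix}1 & \mathbf{1}^T\\ \mathbf{1} & O_a\end{pmatrix}$ from the proof of Theorem~\ref{thm: Ep^a}. Write $s=\sqrt{4a+1}$ and $\mu=\frac{1}{a+1}$. The nonzero eigenvalues are $\lambda_{1,2}=\frac{1\pm s}{2}$, the roots of $\lambda^2=\lambda+a$.

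\textbf{Step 1: the eigenbasis.} For $\lambda\in\{\lambda_1,\lambda_2\}$, the vector $(\lambda,1,\dots,1)^T$ is an eigenvector. Indeed, $A(\lambda,\mathbf{1})^T=(\lambda+a,\lambda\mathbf{1})^T=\lambda(\lambda,\mathbf{1})^T$, because $\lambda^2=\lambda+a$. Its squared norm is $\lambda^2+a=\lambda+2a$. So after normalization its entry at vertex $1$ squares to $\lambda_j^2/(\lambda_j+2a)$, and its entry at each $p^i$ squares to $1/(\lambda_j+2a)$. The eigenspace for $0$ is $\{(0,x)^T : x\perp\mathbf{1}\}$, of dimension $a-1$. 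I will not choose a basis for it explicitly. Instead I use that for any orthonormal basis $\{x^{(k)}\}$ of $\mathbf{1}^\perp\subset\mathbb{R}^a$, the quantity $\sum_k (x^{(k)}_i)^2$ is the diagonal entry of the orthogonal projector $I_a-\frac1a J_a$, namely $1-\frac1a$. These zero-eigenvectors vanish at vertex $1$.

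\textbf{Step 2: assemble the sums.} Since $|\lambda_1-\mu|=\lambda_1-\mu$, and $|\lambda_2-\mu|=\mu-\lambda_2$ (as $\lambda_2<0$), we get
\[
\mathcal{E}(1)=(\lambda_1-\mu)\frac{\lambda_1^2}{\lambda_1+2a}+(\mu-\lambda_2)\frac{\lambda_2^2}{\lambda_2+2a},
\]
\[
\mathcal{E}(p^i)=\frac{\lambda_1-\mu}{\lambda_1+2a}+\frac{\mu-\lambda_2}{\lambda_2+2a}+\mu\left(1-\frac1a\right).
\]
The last term is already $\frac{a-1}{a(a+1)}$, which matches the claimed second summand.

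\textbf{Step 3: simplify.} This is the main, purely algebraic obstacle. I will put the two terms over the common denominator $(\lambda_1+2a)(\lambda_2+2a)=\lambda_1\lambda_2+2a(\lambda_1+\lambda_2)+4a^2=4a^2+a=a(4a+1)$. I then use $\lambda_1+\lambda_2=1$, $\lambda_1\lambda_2=-a$, $\lambda_1-\lambda_2=s$ and $s^2=4a+1$ to collapse the numerator to a rational multiple of $s$. Dividing by $a s^2$ should produce the factor $1/s$ seen in both claimed formulas. The terms involving $\mu$ should combine into a coefficient matching the $(a+1)$ denominators.

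\textbf{Step 4: check.} As a consistency check I will verify via Lemma~\ref{lem: loopve} that $\mathcal{E}(1)+a\,\mathcal{E}(p^i)$ equals $s+\frac{a-1}{a+1}$ from Theorem~\ref{thm: Ep^a}. This sum-rule can also be used to obtain one of the two formulas from the other, if the direct simplification of one of them proves messier.
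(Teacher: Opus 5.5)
Your proposal is correct and follows essentially the paper's proof: explicit eigenvectors constant on the pendant vertices, a zero-eigenspace weight of $0$ at vertex $1$ and $1-\frac{1}{a}$ at each $p^i$, and Vieta's relations $\lambda_1+\lambda_2=1$, $\lambda_1\lambda_2=-a$ for the algebra. The paper obtains the $1-\frac{1}{a}$ weight from completeness of the orthonormal basis rather than from your projector $I_a-\frac{1}{a}J_a$. The only other difference is cosmetic: the paper uses $\lambda_j^2=\lambda_j+a$ to rewrite your weights as $\frac{\lambda_j^2}{\lambda_j+2a}=\frac{(-1)^{j-1}\lambda_j}{\sqrt{4a+1}}$ and $\frac{1}{\lambda_j+2a}=\frac{(-1)^{j-1}}{\lambda_j\sqrt{4a+1}}$, which makes your Step 3 immediate. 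Your common-denominator route also closes as you predict, since $\lambda_1^2(\lambda_2+2a)=a\lambda_1\sqrt{4a+1}$, $\lambda_2^2(\lambda_1+2a)=-a\lambda_2\sqrt{4a+1}$, and the pendant numerator equals $(2a+\mu)\sqrt{4a+1}$.
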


\begin{proof}
Let $V = \{1, p, p^2, \dots, p^a\}$. With ordering
\(
(1,p,\dots,p^a),
\)
the adjacency matrix is
\[
A = \begin{pmatrix} 1 & \mathbf{1}_a^T \\ \mathbf{1}_a & O_a \end{pmatrix}
\]
where $\mathbf{1}_a$ is the all-one column vector. Let
\(
Y = \begin{pmatrix} y_1 \\ y_p \mathbf{1}_a \end{pmatrix}
\)
and suppose $AY=\lambda Y$. Then
\[
AY = \begin{pmatrix} y_1 + a y_p \\ y_1 \mathbf{1}_a \end{pmatrix}
\]
so
\(
\lambda y_1 = y_1 + a y_p
\)
and
\(
\lambda y_p = y_1
\).
For $\lambda\neq 0$, we get $y_p = \frac{y_1}{\lambda}$ and hence
\(
\lambda^2 - \lambda - a = 0
\)
with roots
\[
\lambda_1 = \frac{1 + \sqrt{4a+1}}{2}, \qquad \lambda_2 = \frac{1 - \sqrt{4a+1}}{2}
\]
and $0$ with multiplicity $a-1$.
Since $\operatorname{tr}(A)=1$, we have $\mu_n=\frac{1}{a+1}$. For
$\lambda\in\{\lambda_1,\lambda_2\}$,
\[
\|Y\|^2=y_1^2+a\left(\frac{y_1}{\lambda}\right)^2
=y_1^2\frac{\lambda^2+a}{\lambda^2}.
\]
Using $\lambda^2 = \lambda + a$, normalization gives 
\[
y_1^2 = \frac{\lambda^2}{\lambda^2 + a} = \frac{\lambda}{2\lambda - 1}.
\]
Let $q_j(v)^2$ denote the squared coordinate of the normalized eigenvector corresponding to the eigenvalue $\lambda_j$, evaluated at the vertex $v$. Thus, for the central vertex $1$, the squared weights are:
\[
q_1(1)^2 = \frac{\lambda_1}{\sqrt{4a+1}}, \qquad q_2(1)^2 = \frac{-\lambda_2}{\sqrt{4a+1}},
\]
and for each pendant vertex $p^i$, the squared weights are:
\[
q_1(p^i)^2 = \frac{1}{\lambda_1 \sqrt{4a+1}}, \qquad q_2(p^i)^2 = \frac{-1}{\lambda_2 \sqrt{4a+1}}.
\]
The zero-eigenspace contribution at $1$ is
\[
1 - \left( \frac{\lambda_1}{\sqrt{4a+1}} + \frac{-\lambda_2}{\sqrt{4a+1}} \right) = 1 - \frac{\lambda_1 - \lambda_2}{\sqrt{4a+1}} = 0.
\]
At any pendant vertex $p^i$, it is
\[
1 - \frac{1}{\sqrt{4a+1}} \left( \frac{1}{\lambda_1} - \frac{1}{\lambda_2} \right) = 1 - \frac{1}{a} = \frac{a-1}{a}.
\]
For central vertex, $\lambda_1>\mu>\lambda_2$:
\begin{align*}
\mathcal{E}_{\mathcal{G}}(1)
&= (\lambda_1 - \mu)\frac{\lambda_1}{\sqrt{4a+1}} + (\mu - \lambda_2)\frac{-\lambda_2}{\sqrt{4a+1}} \\
&= \frac{\lambda_1^2 + \lambda_2^2 - \mu(\lambda_1 + \lambda_2)}{\sqrt{4a+1}} \\
&= \frac{2a + 1 - \frac{1}{a+1}}{\sqrt{4a+1}} \\
&= \frac{2a^2 + 3a}{(a+1)\sqrt{4a+1}}.
\end{align*}
For pendant vertex:
\begin{align*}
\mathcal{E}_{\mathcal{G}}(p^i)
&= (\lambda_1 - \mu)\frac{1}{\lambda_1\sqrt{4a+1}} + (\mu - \lambda_2)\frac{-1}{\lambda_2\sqrt{4a+1}} + \mu\frac{a-1}{a} \\
&= \frac{1}{\sqrt{4a+1}} \left[ \frac{\lambda_1 - \mu}{\lambda_1} - \frac{\mu - \lambda_2}{\lambda_2} \right] + \frac{\mu(a-1)}{a} \\
&= \frac{1}{\sqrt{4a+1}} \left[ 2 - \mu \left( \frac{1}{\lambda_1} + \frac{1}{\lambda_2} \right) \right] + \frac{\mu(a-1)}{a} \\
&= \frac{1}{\sqrt{4a+1}} \left( 2 + \frac{1}{a(a+1)} \right) + \frac{a-1}{a(a+1)} \\
&= \frac{2a^2 + 2a + 1}{a(a+1)\sqrt{4a+1}} + \frac{a-1}{a(a+1)}.
\end{align*}
\end{proof}

\begin{theorem}
Let $\mathcal{G} \cong G^*_{Dp}(n)$, for $n = \prod_{i=1}^k p_i^{a_i} > 1$ with $\tau(n)$ divisors, let $\lambda_{1,2}^{(i)} = \frac{1 \pm \sqrt{4a_i+1}}{2}$.
For $v \in V(G^*_{Dp}(n))$ and $x \in \{1,2\}$, define
\[
w_{v,x}^{(i)} = \frac{(-1)^{x-1}}{\sqrt{4a_i+1}} \begin{cases} \lambda_x^{(i)}, & p_i \nmid v, \\[6pt] (\lambda_x^{(i)})^{-1}, & p_i \mid v. \end{cases}
\]Then vertex energy $\mathcal{E}_{\mathcal{G}}(v)$ is
\[
\mathcal{E}_{\mathcal{G}}(v) = \frac{1}{\tau(n)} \left( 1 - \sum_{x \in \{1,2\}^k} \prod_{i=1}^k w_{v,x_i}^{(i)} \right) + \sum_{x \in \{1,2\}^k} \left| \prod_{i=1}^k \lambda_{x_i}^{(i)} - \frac{1}{\tau(n)} \right| \prod_{i=1}^k w_{v,x_i}^{(i)}.
\]
\end{theorem}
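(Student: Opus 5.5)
Our approach is to apply the spectral vertex-energy formula to the tensor decomposition of Lemma \ref{lem: tensor}. Write $A(n)=\bigotimes_{i=1}^k A(p_i^{a_i})$ and $\mu=1/\tau(n)$; as in the proof of Theorem \ref{thm: En}, the trace is $1$. For each factor we use the orthonormal eigenbasis from the previous theorem. This consists of the two unit eigenvectors $q_1^{(i)},q_2^{(i)}$ for $\lambda_1^{(i)},\lambda_2^{(i)}$, together with an orthonormal basis of the $(a_i-1)$-dimensional kernel. The kernel vectors are orthogonal to $q_1^{(i)},q_2^{(i)}$ because $A(p_i^{a_i})$ is symmetric.

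By Lemma \ref{lem: ten}, the Kronecker products of these basis vectors are eigenvectors of $A(n)$. Kronecker products of orthonormal bases are orthonormal, so we obtain a full orthonormal eigenbasis of $A(n)$. The eigenvalue attached to a product vector is the product of the factor eigenvalues.

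Fix a divisor $v$ with exponent vector $(b_1,\dots,b_k)$. The squared coordinate of a product vector at $v$ is the product of the squared coordinates of the factors at $b_i$. From the previous theorem, the squared coordinate of $q_x^{(i)}$ equals $\lambda_x^{(i)}/\sqrt{4a_i+1}$ (with sign $(-1)^{x-1}$) when $b_i=0$, i.e.\ $p_i\nmid v$. It equals $(-1)^{x-1}/(\lambda_x^{(i)}\sqrt{4a_i+1})$ when $p_i\mid v$, because all pendant vertices $p^j$ share the same weight. This is exactly $w^{(i)}_{v,x}$. Hence the eigenvectors indexed by $x\in\{1,2\}^k$, which involve no kernel factor, carry weight $\prod_i w^{(i)}_{v,x_i}$ at $v$ and eigenvalue $\prod_i\lambda^{(i)}_{x_i}$. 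They contribute $\sum_x\bigl|\prod_i\lambda^{(i)}_{x_i}-\mu\bigr|\prod_i w^{(i)}_{v,x_i}$ to the vertex energy.

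Every remaining product vector has at least one kernel factor, so its eigenvalue is $0$ and it contributes $|0-\mu|=\mu$ times its weight at $v$. We avoid computing these weights individually. Orthonormality of the whole basis gives $\sum_j u_{jv}^2=1$, so the total weight of the zero-eigenvalue vectors at $v$ equals $1-\sum_{x}\prod_i w^{(i)}_{v,x_i}$. The zero eigenspace therefore contributes $\frac1{\tau(n)}\bigl(1-\sum_x\prod_i w^{(i)}_{v,x_i}\bigr)$. Adding the two contributions gives the stated formula.

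The main point needing care is the eigenvalue bookkeeping. A product eigenvalue of the form $\prod\lambda^{(i)}_{x_i}$ could accidentally equal $0$, but it cannot, since every $\lambda^{(i)}_x\neq0$. A nonzero product could also coincide with another nonzero product. Such a coincidence is harmless: the formula $\sum_j|\lambda_j-\mu|u_{jv}^2$ is independent of the choice of orthonormal basis within each eigenspace, because it equals $|A-\mu I|_{vv}$.

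A second check is the sign convention in $w^{(i)}_{v,2}$. Since $\lambda_2^{(i)}<0$, the factor $(-1)^{x-1}$ makes the weight $-\lambda_2^{(i)}/\sqrt{4a_i+1}$ or $-1/(\lambda_2^{(i)}\sqrt{4a_i+1})$, which is positive. This matches the squared coordinates $q_2(1)^2$ and $q_2(p^i)^2$ obtained in the previous theorem.
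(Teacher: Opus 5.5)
Your proposal is correct and follows essentially the same route as the paper: the Kronecker decomposition $A(n)=\bigotimes_i A(p_i^{a_i})$, products of the local normalized eigenvectors whose squared coordinates are the weights $w^{(i)}_{v,x}$, and the zero-eigenspace weight at $v$ obtained as $1-\sum_x\prod_i w^{(i)}_{v,x_i}$ from $\sum_j u_{jv}^2=1$. Your extra checks make explicit several points the paper leaves implicit: that the product basis is orthonormal, that the sum is independent of the basis chosen within each eigenspace since it equals $|A-\mu I|_{vv}$, and that no product $\prod_i\lambda^{(i)}_{x_i}$ can vanish.
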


\begin{proof}
Write
\(
v=\prod_{i=1}^k p_i^{b_i}, \qquad 0\le b_i\le a_i,
\)
and identify the vertex set with
$V_1\times\cdots\times V_k$, where $V_i=\{1,p_i,\dots,p_i^{a_i}\}$. By
Lemma~\ref{lem: tensor},
\(
A(n)=A(p_1^{a_1})\otimes A(p_2^{a_2})\otimes\cdots\otimes A(p_r^{a_r}).
\)
For each $i$, the matrix $A_i$ has nonzero eigenvalues
$\lambda_1^{(i)},\lambda_2^{(i)}$, and $0$ with multiplicity $a_i-1$. The
squared coordinates of normalized local eigenvectors for $p_i\nmid v$ are:
\[
\left(X_{1}^{(i)}(v_i)\right)^2 = \frac{\lambda_1^{(i)}}{\sqrt{4a_i+1}}, \qquad \left(X_{2}^{(i)}(v_i)\right)^2 = \frac{-\lambda_2^{(i)}}{\sqrt{4a_i+1}}.
\]
 and for $p_i\mid v$:
\[
\left(X_{1}^{(i)}(v_i)\right)^2 = \frac{1}{\lambda_1^{(i)}\sqrt{4a_i+1}}, \qquad \left(X_{2}^{(i)}(v_i)\right)^2 = \frac{-1}{\lambda_2^{(i)}\sqrt{4a_i+1}}.
\]
These are precisely the weights $w_{v,1}^{(i)}$ and
$w_{v,2}^{(i)}$.\\
For each $x=(x_1,\dots,x_k)\in\{1,2\}^k$, the Kronecker product gives a
nonzero eigenvalue
\(
\Lambda_x = \prod_{i=1}^k \lambda_{x_i}^{(i)}.
\)
Its squared coordinate at $v$ is
\(
q_x(v)^2 = \prod_{i=1}^k w_{v,x_i}^{(i)}.
\)
All remaining eigenvalues are $0$. Hence the nonzero part of $\mathcal{E}_{\mathcal{G}}(v)$ is
\[
E_{\neq 0} = \sum_{x \in \{1,2\}^k} \left| \prod_{i=1}^k \lambda_{x_i}^{(i)} - \frac{1}{\tau(n)} \right| \prod_{i=1}^k w_{v,x_i}^{(i)}.
\]
The zero-eigenspace contribution is
\[
E_0 = \frac{1}{\tau(n)} \left( 1 - \sum_{x \in \{1,2\}^k} \prod_{i=1}^k w_{v,x_i}^{(i)} \right).
\]
Therefore,
\begin{align*}
\mathcal{E}_{\mathcal{G}}(v)&=E_0+E_{\neq 0}\\
&= \frac{1}{\tau(n)} \left( 1 - \sum_{x \in \{1,2\}^k} \prod_{i=1}^k w_{v,x_i}^{(i)} \right) + \sum_{x \in \{1,2\}^k} \left| \prod_{i=1}^k \lambda_{x_i}^{(i)} - \frac{1}{\tau(n)} \right| \prod_{i=1}^k w_{v,x_i}^{(i)}.
\end{align*}
\end{proof}

\section{Conclusion}
By characterizing the spectral properties of the modified divisor prime graph, we have established a tractable model for both total and localized energy in arithmetic structures. Our analysis includes the calculation of classical energy for the divisor prime graph $G_{Dp}(p^a)$ and extends to the modified divisor prime graph $G^*_{Dp}(n)$ through Kronecker product formulations. We have introduced vertex energy for graphs with self-loops and provided expressions for vertex energy of $G^*_{Dp}(n)$, showing that spectral contributions are precisely determined by the prime-power factorization of $n$. This research provides a rigorous application of generalized energy theory to graphs with self-loops, bridging structured number theory with spectral network analysis.

\end{document}